\newtheorem{remark}{\it Remark}[section]
\newtheorem{definition}{\it Definition}[section]
\newtheorem{proposition}{\it Proposition}[section]
\newtheorem{theorem}{\it Theorem}[section]
\newtheorem{lemma}{\it Lemma}[section]
\newcommand{\tra}[1]{\,{\vphantom{#1}}^{\textsc{t}}\!{#1}}
\begin{document}

\title{Discrete calculus of variations for quadratic lagrangians}

\author[LMPA]{P.~Ryckelynck\corref{cor1}}
\ead{ryckelyn@lmpa.univ-littoral.fr}
\author[LMPA]{L.~Smoch}
\ead{smoch@lmpa.univ-littoral.fr}

\address[LMPA]{ULCO, LMPA, F-62100 Calais, France\\Univ Lille Nord de France, F-59000 Lille, France. CNRS, FR 2956, France.}

\begin{abstract}
We develop in this paper a new framework for discrete calculus of variations when the actions have densities involving an arbitrary discretization operator. We deduce the discrete Euler-Lagrange equations for piecewise continuous critical points of sampled actions. Then we characterize the discretization operators such that, for all quadratic lagrangian, the discrete Euler-Lagrange equations converge to the classical ones. 
\end{abstract}

\begin{keyword}
Calculus of variations \sep Functional equations \sep Discretization \sep Boundary value problems \sep Periodic solutions

\MSC 49K21 \sep 49K15 \sep 65L03 \sep 65L12

\end{keyword}

\maketitle

\section{Introduction}

Discrete variational problems and discrete mechanics constitute active fields of research and have been thoroughly studied by many authors
among which we may cite J. Marsden and J. Cresson, see for instance \cite{Cre,CFT,MW,OJM,SLO}. A first approach has its roots in the work of J. A. Cadzow, developed in the 1970s, and consists in replacing dynamical variables and paths by discrete variables and paths which are finite linear combinations of indicator functions. Based on this discretization, the action integral, the lagrangian and the energy are approximated on a time slice $[k\varepsilon,(k+1)\varepsilon]$, for some time delay $\varepsilon$, by discrete analogs which are finite sums depending of a finite number of variables in $\mathbb{R}^d$.  The presentation of this theory, together with an history and large bibliography, is developed in \cite{OJM}. 
Another different approach consists in replacing the derivative $\mathbf{\dot x}(t)$ of the dynamical variable $\mathbf{x}(t)$ with a three terms scale derivative $\Box_{\varepsilon,\mathcal{Q}} \mathbf{x}(t)$. In this way, a dynamical variable $\mathbf{x}(t)$ remains even after the sampling process a function $\Box_{\varepsilon,\mathcal{Q}}\mathbf{x}(t)$ of ``continuous'' time. The principle of least action may be extended to the case of non-differentiable dynamical variables of H\"{o}lderian regularity $\mathcal{C}^{1/2}$. We refer to \cite{Cre,CFT} for further details among which the extension of Noether's Theorem and also an interesting informal discussion about fractal physics. While the first approach is widely used in numerical analysis of variational dynamical systems, the second one is, roughly speaking, more concerned with foundational discussion of microphysics.

In this paper we highlight and use the equations of motion 
\begin{equation}
\nabla _\mathbf{x} \displaystyle \mathcal{L} +
\Box_{-\varepsilon }\nabla_{\mathbf{\dot x}}\mathcal{L}=0,  
\label{intrinsic}
\end{equation}
for all lagrangian $\mathcal{L}$ and general discretization operator $\Box_\varepsilon$ defined as 
\begin{equation}
\Box _\varepsilon \mathbf{x}(t)=\sum_{\ell=-N}^Nc_\ell\mathbf{x}(t+\ell\varepsilon)\chi_{-\ell}(t).
\label{ourbox}
\end{equation}
In this formula, the symbols $\chi_{-\ell}(t)$ denote characteristic functions of some intervals, and prevent $t+\ell\varepsilon$ from belonging to an interval in which $\mathbf{x}$ is undefined.

A first motivation in this paper is to justify the choice of $\Box_{\varepsilon,\mathcal{Q}}$ in \cite{Cre,CFT}. To do this, we use throughout the previous generalization (\ref{ourbox}) and distinguish among the operators $\Box_\varepsilon$ those having some specific features. A second intend is to understand the similarities and differences between the equations of motions for discretized dynamical variables and the classical Euler-Lagrange equations $\mathcal{L}_\mathbf{x}'-d/dt\mathcal{L}'_{\dot{\mathbf{x}}}=0$. A third one is to generalize some results of \cite{Cre,CFT} to operators $\Box_\varepsilon$ not satisfying some Leibniz formula and to extremal curves $\mathbf{x}(t)$ which are not $C^{1/2}$ but rather of a weaker specific regularity $\mathcal{C}_{pw}$.

In Section 2, we first fix the notation used in the paper and study two specific sets $\mathcal{O}_{N,\varepsilon}$ and $\tilde{\mathcal{O}}_{N,\varepsilon}$ of operators (\ref{ourbox}). In Section 3, we describe a class of operators $\Box_\varepsilon$ satisfying an appropriate extension of Leibniz formula. 
In Section 4, we get the necessary first order condition for finding a minimizer of a discrete action $\mathcal{A}_{disc}(\mathbf{x)}$. We prove the equation (\ref{intrinsic}), which depends only on $\Box_\varepsilon $ and is reminiscent of Cresson's result. 
In Section 5, we first introduce the classical and discrete models for the quadratic lagrangians and emphasize on their similarity. Next, the subset of operators $\Box_\varepsilon$ for which discrete Euler Lagrange equations have oscillatory solutions is examined. In Section 6, we state the main result of the paper which characterizes the set of operators $\Box_\varepsilon$ for which the convergence of (\ref{intrinsic}) holds, for every quadratic lagrangian.

In this paper, C.E.L./D.E.L. stand for classical/discrete Euler-Lagrange equations.

\section{Notation and framework}

First, let us fix some notation. Let $[a,b]$ be some interval of time and a time delay $\varepsilon >0$ be fixed throughout. We denote by $d$ the ``physical" dimension and by $N$ the number of samples in $\mathbb{C}^d$. We use the notation $i$ for $\sqrt{-1}$. We denote by $\chi _\ell(t)$ the characteristic function of the interval $[\max(a,a+\ell\varepsilon ),\min (b,b+\ell\varepsilon )]$, for all integer $\ell$.

Many interesting ways to generate discrete actions arise in the following way. The derivative $\mathbf{\dot{x}}(t)$ occuring in lagrangians is replaced with a linear combination of discretized values of $\mathbf{x}(t)$
\begin{equation}
\Box^{[r,s]}_\varepsilon \mathbf{x}(t)=-\chi_{+1}(t)\frac{s}{\varepsilon}\mathbf{x}(t-\varepsilon)+\frac{s-r}{\varepsilon}\mathbf{x}(t)+\chi_{-1}(t)\frac{r}{\varepsilon}\mathbf{x}(t+\varepsilon)
\label{ourboxpq}
\end{equation}
where $r,s\in\mathbb{C}^\star$. In \cite{Cre,CFT}, the authors choose the values $r=(1-i)/2$ and $s=(1+i)/2$ but do not work with characteristic functions. Let $\Box_{\varepsilon,\mathcal{Q}}$ be defined up to now by $\Box_\varepsilon^{[\frac{1-i}{2}\frac{1+i}{2}]}$. Obviously, (\ref{ourboxpq}) is a particular case of (\ref{ourbox}) with $N=1$, $s=-c_{-1}\varepsilon$, $r=c_1\varepsilon$ and $c_0=-(c_{-1}+c_1)$. We generalize this process by using linear combinations of $2N+1$ terms of the shape $c_\ell\mathbf{x}(t+\ell\varepsilon )\chi_{-\ell}(t)$ where $c_\ell\in\mathbb{C}$ are fixed.

\begin{definition}
Given $\alpha$ and $\beta$ in $\mathbb{C}^d$, the affine space $\mathcal{C}_{pw}(d,N,\alpha,\beta)$ is the set of the functions $\mathbf{x}%
:[a,b]\rightarrow \mathbb{C}^d$ satisfying $\mathbf{x}(a)=
\mathbf{\alpha }$, $\mathbf{x}(b)=\mathbf{\beta }$ and continuous on each interval $[a+\ell\varepsilon
,a+(\ell+1)\varepsilon ]\cap [a,b]$ for all integer $\ell$.
\end{definition}

This space is endowed with the topology of uniform convergence, its tangent space is everywhere the Banach space $\mathcal{C}_{pw}(d,N,0,0)$.

\begin{definition}
For any $\mathbf{x}\in \mathcal{C}_{pw}(d,N,\alpha ,\beta )$, we
denote by $S(\mathbf{x})$ the row vector valued function 
\begin{equation}
S(\mathbf{x})(t)=(x_1(t-N\varepsilon )\chi _N(t)\ldots x_j(t+\ell\varepsilon
)\chi _{-\ell}(t)\ldots x_d(t+N\varepsilon )\chi _{-N}(t)).
\label{Sx}
\end{equation}
\end{definition}

In $S(\mathbf{x})$ the ordering of variables is lexicographic, first on $\ell$ and next on $j$. The function $S(\mathbf{x})$ lies in a product of $d(2N+1)$
affine spaces modeled on $\mathcal{C}_{pw}(1,N,0,0)$ and, in this way, $S$ is an injective continuous linear mapping. 

\begin{definition}
If $(c_\ell)\in \mathbb{C}^{2N+1}$, the generalized scale derivative is the continuous linear endomorphism of $\mathcal{C}_{pw}(d,N,\alpha ,\beta )$ defined by (\ref{ourbox}) for all $t\in [a,b]$. 
\end{definition}

If $d=1$, for all $\mathbf{x}\in \mathcal{C}_{pw}(1,N,\alpha,\beta)$, we have
\begin{center}
$\Box _\varepsilon \mathbf{x}=(c_{-N}\ldots c_0\ldots c_N)\tra(S(\mathbf{x}))$.
\end{center}
When $d\geq 2$, we have a similar relationship between $\Box _\varepsilon \mathbf{x}$ and $S(\mathbf{x})$, involving a $d\times d(2N+1)$ banded matrix 
\[
\Box _\varepsilon \mathbf{x}=\begin{pmatrix}
c_{-N}\ldots c_0\ldots c_N & & & \\
 & c_{-N}\ldots c_0\ldots c_N & & 0\\
0 & & \ddots & \\
 & & & c_{-N}\ldots c_0\ldots c_N
\end{pmatrix}\tra (S(\mathbf{x})). 
\]
Note that $\Box_\varepsilon$ is a well defined continuous endomorphism of the Banach space $\mathcal{C}_{pw}(d,N,\alpha,\beta)$.\\
The convergence of $\Box_\varepsilon \mathbf{x}$ to the ordinary derivative $\dot{\mathbf{x}}$ is connected to the following definitions.

\begin{definition}
The vector space $\mathcal{O}_{N,\varepsilon }$ is the set of operators $\Box
_\varepsilon $ of the shape (\ref{ourbox}) with coefficients $c_\ell=\gamma _\ell/\varepsilon $ and where $\gamma_\ell\in\mathbb{C}$ do not depend on $\varepsilon$. Algebraically, $\mathcal{O}_{N,\varepsilon }\simeq \mathbb{C}^{2N+1}$.
\end{definition}

\begin{definition}
The affine space $\tilde{\mathcal{O}}_{N,\varepsilon}\subset \mathcal{O}_{N,\varepsilon }$ contains operators $\Box_\varepsilon$ for which  $\Box_\varepsilon 1=0$ and $\Box_\varepsilon t=1$ when $t$ lies in the safety interval $\mathcal{I}_S=[a+2N\varepsilon
,b-2N\varepsilon ]$.
\label{Otilde}
\end{definition}

We shall see, as a consequence of Theorem \ref{Theorem-conv-schemas}, that if $\Box_\varepsilon\in\tilde{\mathcal{O}}_{N,\varepsilon}$, then $\Box_\varepsilon\mathbf{x}(t)$ tends to $\dot{\mathbf{x}}(t)$ locally uniformly in $]a,b[$, for all $\mathbf{x}\in\mathcal{C}^2([a,b],\mathbb{C}^d)$.

\begin{remark}\rm
The discrete Euler forward and backward difference operators are respectively defined by $\Delta_\varepsilon^+=\Box_\varepsilon^{[1,0]}$ and $\Delta_\varepsilon^-=\Box_\varepsilon^{[0,1]}$. Their mean is the symmetric difference operator $\displaystyle \Box_{\varepsilon,\mathcal{S}}=\Box_\varepsilon^{[\frac{1}{2},\frac{1}{2}]}$. These three operators and $\Box_{\varepsilon,\mathcal{Q}}$ are elements of $\tilde{\mathcal{O}}_{1,\varepsilon }$. Notice that the operator $\Box_{\varepsilon,\mathcal{Q}}$ is also related to the one-dimensional version of the operator used by Kime \cite{Kime} when she solves numerically the Schroedinger equation via Forward-Difference and Backward-Difference approximations at various steps in time.
\end{remark}

\begin{remark}\rm
The operator $\displaystyle\mathbf{x}\rightarrow\Delta_\varepsilon^+(\Delta_\varepsilon^+\mathbf{x}(t-\varepsilon))=\frac{1}{\varepsilon}\Box_\varepsilon^{[1,-1]}\mathbf{x}(t)$ approximates well the second derivative but does not lie in $\mathcal{O}_{1,\varepsilon}$.
\end{remark}

\section{Leibniz formulas for $\Box_\varepsilon$ operators in $\mathcal{O}_{1,\varepsilon}$}

In order to deduce his version of D.E.L. with $\Box_{\varepsilon,\mathcal{Q}}$, Cresson in \cite{Cre} found an analog of the classical Leibniz formula. When such a formula exists, a principle of discrete virtual works may be stated. In this section, we generalize Cresson's identity to the family of operators (\ref{ourboxpq}).

\begin{theorem}
Let $\Box_{\varepsilon}\in\mathcal{O}_{1,\varepsilon}$ of the shape (\ref{ourboxpq}) with $r,s\in\mathbb{C}^\star$ and $s/r\notin \mathbb{R}$ then, for all piecewise continuous functions $\mathbf{f},\mathbf{g}:[a,b]\rightarrow\mathbb{C}^d$, we get the generalized Leibniz formula
\begin{equation*}
\Box^{[r,s]}_\varepsilon (\mathbf{f}\cdot\mathbf{g})(t)=\mathbf{f}(t)\cdot\Box^{[r,s]}_\varepsilon \mathbf{g}(t)+
\mathbf{g}(t)\cdot\Box^{[r,s]}_\varepsilon \mathbf{f}(t)+
\end{equation*}
\begin{equation*}
\frac{\varepsilon(r\overline{s}^2-\overline{r}^2s)}{(r\overline{s}-\overline{r}s)^2}\Box^{[r,s]}_\varepsilon\mathbf{f}(t)\cdot\Box^{[r,s]}_\varepsilon \mathbf{g}(t)-\frac{\varepsilon rs(r-s)}{(r\overline{s}-\overline{r}s)^2}\Box^{[\overline{r},\overline{s}]}_\varepsilon \mathbf{f}(t)\cdot\Box^{[\overline{r},\overline{s}]}_\varepsilon \mathbf{g}(t)+
\end{equation*}
\begin{equation}
\frac{\varepsilon rs(\overline{r}-\overline{s})}{(r\overline{s}-\overline{r}s)^2}\left(\Box^{[r,s]}_\varepsilon \mathbf{f}(t)\cdot\Box^{[\overline{r},\overline{s}]}_\varepsilon \mathbf{g}(t)+\Box^{[\overline{r},\overline{s}]}_\varepsilon \mathbf{f}(t)\cdot\Box^{[r,s]}_\varepsilon \mathbf{g}(t)\right).
\label{Leibniz}
\end{equation} 
\label{thm1}
\end{theorem}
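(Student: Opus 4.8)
The plan is to reduce (\ref{Leibniz}) to a purely algebraic statement about the three samples $\mathbf{f}(t-\varepsilon),\mathbf{f}(t),\mathbf{f}(t+\varepsilon)$ (and the corresponding samples of $\mathbf{g}$), then to re-express everything through the two operators $\Box^{[r,s]}_\varepsilon$ and $\Box^{[\overline r,\overline s]}_\varepsilon$ by inverting a $2\times 2$ system. First I would fix $t$, abbreviate $\mathbf f_0=\mathbf f(t)$, $\mathbf f_\pm=\mathbf f(t\pm\varepsilon)$ (likewise for $\mathbf g$), and work on the interior where $\chi_{+1}(t)=\chi_{-1}(t)=1$; the boundary is handled at the very end. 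A direct expansion of the defect from the ordinary Leibniz rule shows that every $\mathbf f_0\cdot\mathbf g_0$ contribution cancels and that it collapses to the compact form
\[
\Box^{[r,s]}_\varepsilon(\mathbf f\cdot\mathbf g)-\mathbf f\cdot\Box^{[r,s]}_\varepsilon\mathbf g-\mathbf g\cdot\Box^{[r,s]}_\varepsilon\mathbf f=\tfrac{1}{\varepsilon}\bigl(-s\,(\mathbf f_--\mathbf f_0)\cdot(\mathbf g_--\mathbf g_0)+r\,(\mathbf f_+-\mathbf f_0)\cdot(\mathbf g_+-\mathbf g_0)\bigr).
\]
This is the heart of the computation: it isolates the defect as a quadratic form in the backward and forward increments $\delta^-\mathbf f=\mathbf f_--\mathbf f_0$ and $\delta^+\mathbf f=\mathbf f_+-\mathbf f_0$.

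Next I would note that, on the interior, $\varepsilon\,\Box^{[r,s]}_\varepsilon\mathbf f=-s\,\delta^-\mathbf f+r\,\delta^+\mathbf f$ and $\varepsilon\,\Box^{[\overline r,\overline s]}_\varepsilon\mathbf f=-\overline s\,\delta^-\mathbf f+\overline r\,\delta^+\mathbf f$, a linear system for $(\delta^-\mathbf f,\delta^+\mathbf f)$ whose determinant is exactly $D:=r\overline s-\overline r s$. The hypothesis $s/r\notin\mathbb R$ is precisely $D\neq 0$ (indeed $D=|r|^2(\overline{s/r}-s/r)$ vanishes iff $s/r$ is real), so the system inverts to $\delta^-\mathbf f=\tfrac{\varepsilon}{D}(\overline r\,\Box^{[r,s]}_\varepsilon\mathbf f-r\,\Box^{[\overline r,\overline s]}_\varepsilon\mathbf f)$ and $\delta^+\mathbf f=\tfrac{\varepsilon}{D}(\overline s\,\Box^{[r,s]}_\varepsilon\mathbf f-s\,\Box^{[\overline r,\overline s]}_\varepsilon\mathbf f)$, with analogous formulas for $\mathbf g$.

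I would then substitute these expressions into the quadratic form above and expand bilinearly, producing the monomials $\Box^{[r,s]}_\varepsilon\mathbf f\cdot\Box^{[r,s]}_\varepsilon\mathbf g$, $\Box^{[\overline r,\overline s]}_\varepsilon\mathbf f\cdot\Box^{[\overline r,\overline s]}_\varepsilon\mathbf g$ and the two cross terms. Collecting the scalar prefactors (a routine but bookkeeping-heavy step involving the $1/\varepsilon$ in front and the $\varepsilon^2/D^2$ from the increments) is expected to yield exactly the coefficients $\varepsilon(r\overline s^2-\overline r^2 s)/D^2$, $-\varepsilon rs(r-s)/D^2$ and $\varepsilon rs(\overline r-\overline s)/D^2$ of (\ref{Leibniz}).

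Finally I would promote the interior identity to all of $[a,b]$. Since the left-hand side of (\ref{Leibniz}) samples only the diagonal products $\mathbf f(t+\ell\varepsilon)\cdot\mathbf g(t+\ell\varepsilon)$, the established equality, read as an identity between bilinear forms in the six free samples of $\mathbf f$ and $\mathbf g$, forces every mixed coefficient of $\mathbf f(t+\ell\varepsilon)\cdot\mathbf g(t+m\varepsilon)$ with $\ell\neq m$ on the right-hand side to vanish identically. Consequently each surviving term on both members carries the same product of characteristic functions ($1$ for the $\ell=m=0$ term, $\chi_{+1}(t)$ for the backward term, $\chi_{-1}(t)$ for the forward term), so switching off the shifts that leave $[a,b]$ affects the two sides identically and (\ref{Leibniz}) holds verbatim for every $t$ and all piecewise continuous $\mathbf f,\mathbf g$. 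I expect the coefficient bookkeeping to be the most error-prone, while the genuinely delicate conceptual point is this last one: reinstating the characteristic functions without spoiling the identity, which rests on the diagonal structure of both members.
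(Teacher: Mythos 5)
Your proof is correct, and it rests on the same two pillars as the paper's: the Leibniz defect of $\Box^{[r,s]}_\varepsilon$ is a quadratic form in the forward/backward increments, namely $\frac{1}{\varepsilon}\left(-s\,\delta^-\mathbf f\cdot\delta^-\mathbf g+r\,\delta^+\mathbf f\cdot\delta^+\mathbf g\right)$, and the pair $\{\Box^{[r,s]}_\varepsilon,\Box^{[\overline r,\overline s]}_\varepsilon\}$ spans the same space as $\{\Delta^+_\varepsilon,\Delta^-_\varepsilon\}$ precisely when $r\overline s-\overline r s\neq 0$, i.e.\ $s/r\notin\mathbb R$. The execution, however, differs in a way worth noting. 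The paper posits the ansatz $W(fg)=W(f)g+fW(g)+d_1W(f)W(g)+d_2\tilde W(f)W(g)+d_3W(f)\tilde W(g)+d_4\tilde W(f)\tilde W(g)$ for a general auxiliary operator $\tilde W=\Box^{[r',s']}_\varepsilon$ with $rs'-sr'\neq 0$, feeds in the individual rules $\Delta^\sigma_\varepsilon(fg)=\Delta^\sigma_\varepsilon(f)g+f\Delta^\sigma_\varepsilon(g)+\sigma\varepsilon\Delta^\sigma_\varepsilon(f)\Delta^\sigma_\varepsilon(g)$, and solves a $4\times4$ system of determinant $-(rs'-sr')^4$ before specializing $r'=\overline r$, $s'=\overline s$. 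You instead compute the defect directly and invert the $2\times2$ change of basis $(\delta^-\mathbf f,\delta^+\mathbf f)\mapsto(\varepsilon\Box^{[r,s]}_\varepsilon\mathbf f,\varepsilon\Box^{[\overline r,\overline s]}_\varepsilon\mathbf f)$ of determinant $D=r\overline s-\overline r s$; I checked your three resulting coefficients and they agree with (\ref{Leibniz}). Your route is more constructive (it derives the formula rather than verifying an ansatz, and explains why exactly these four quadratic monomials can occur), while the paper's yields the more general identity for an arbitrary $\Box^{[r',s']}_\varepsilon$ as a by-product. Your final step deserves emphasis: the paper's proof never discusses the characteristic functions $\chi_{\pm1}$ near the endpoints, whereas your observation that both members are bilinear forms in the six samples with vanishing off-diagonal coefficients, so that switching off an out-of-range shift amounts to setting $\mathbf f_{\pm}=\mathbf g_{\pm}=0$ simultaneously on both sides, closes that gap cleanly and is a genuine addition rather than a redundancy.
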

\begin{proof}
The formula (\ref{Leibniz}) is $\mathbb{C}$-bilinear w.r.t. $\mathbf{f}$ and $\mathbf{g}$. Having in mind the properties of the inner product and the fact that $\Box_\varepsilon$ acts component-wise, we may suppose without loss of generality that $d=1$ and $\mathbf{f},\mathbf{g}\in\mathbb{R}$.   
We slightly generalize the proof of Theorem 2.1 of \cite{Cre} which attempts to give a formula such as
\begin{center}
$W(fg)=W(f)g+fW(g)+$
\end{center}
\vspace{-0.2cm}
\begin{equation}
d_1W(f)W(g)+d_2\tilde W(f)W(g)+d_3W(f)\tilde W(g)+d_4\tilde W(f)\tilde W(g).
\label{Leib1}
\end{equation}
Here $W$ and $\tilde{W}$ are two operators in $\tilde{O}_{1,\varepsilon}$, $f$ and $g$ are arbitrary in $\mathcal{C}_{pw}(1,1,\alpha,\beta)$ and  $d_1,d_2,d_3,d_4$ are four complex numbers. Now, we choose $W=\Box_\varepsilon^{[r,s]}$ and $\tilde{W}=\Box_\varepsilon^{[r',s']}$ for some complex numbers $r,s,r',s'$ such that $rs'-sr'\neq 0$. We have obviously $W=r\Delta_\varepsilon^++s\Delta_\varepsilon^-$ and $\tilde W=r'\Delta_\varepsilon^++s'\Delta_\varepsilon^-$. But we have also the well-known formulas for $\Delta_\varepsilon^+$ and $\Delta_\varepsilon^-$
\begin{equation}
\Delta_\varepsilon^\sigma(fg)=\Delta_\varepsilon^\sigma(f)g+f\Delta_\varepsilon^\sigma(g)+\sigma\varepsilon\Delta_\varepsilon^\sigma(f)\Delta_\varepsilon^\sigma(g),
\end{equation}
where $\sigma=\pm 1$. Substituting the four previous formulas in (\ref{Leib1}), the identity (\ref{Leib1}) holds if and only if the coefficients $d_1,d_2,d_3$ and $d_4$ satisfy 
\begin{equation}
\begin{pmatrix}
r^2 & rr' & rr' & r'^2 \\
rs & sr' & rs' & r's' \\
rs & rs' & sr' & r's'\\
s^2 & ss' & ss' & s'^2
\end{pmatrix}\begin{pmatrix}d_1\\d_2\\d_3\\d_4\end{pmatrix}=\begin{pmatrix}r\varepsilon\\0\\0\\-s\varepsilon\end{pmatrix}
\label{sys1}
\end{equation}
whose determinant is equal to $-(rs'-sr')^4\neq 0$. We replace in (\ref{Leib1}) the coefficients $d_\ell$ by their explicit values 
\begin{equation}
d_1=\frac{\varepsilon}{\delta}(rs'^2-sr'^2),~d_2=d_3=\frac{\varepsilon rs}{\delta}(r'-s'),~d_4=\frac{\varepsilon rs}{\delta}(s-r)
\end{equation}
where $\delta=(rs'-sr')^2$. Since $s/r\notin\mathbb{R}$, we can choose $r'=\overline{r}$ and $s'=\overline{s}$ and we get easily the formula (\ref{Leibniz}).
\end{proof}

As an example, we get $-d_1=d_2=d_3=d_4=-\frac{1}{2}i\varepsilon$ for the operator $\Box_{\varepsilon,\mathcal{Q}}=\Box_{\varepsilon}^{[\frac{1-i}{2},\frac{1+i}{2}]}$ chosen in \cite{Cre,CFT}, so that for all piecewise continuous  $f,g:[a,b]\rightarrow\mathbb{R}$ 
\begin{center}
$\displaystyle \Box_{\varepsilon,\mathcal{Q}}(fg)=\Box_{\varepsilon,\mathcal{Q}}(f)g+f\Box_{\varepsilon,\mathcal{Q}}(g)+\frac{1}{2}i\varepsilon[\Box_{\varepsilon,\mathcal{Q}}(f)\Box_{\varepsilon,\mathcal{Q}}(g)-\Box_{\varepsilon,\mathcal{Q}}(f)\boxminus_{\varepsilon,\mathcal{Q}}(g)-\boxminus_{\varepsilon,\mathcal{Q}}(f)\Box_{\varepsilon,\mathcal{Q}}(g)-\boxminus_{\varepsilon,\mathcal{Q}}(f)\boxminus_{\varepsilon,\mathcal{Q}}(g)]$,
\end{center}
where $\boxminus_\varepsilon$ stands for the complex conjugate operator of $\Box_\varepsilon$. (We have corrected here the corresponding formula in \cite{Cre,CFT}.)

\section{Critical points of discrete actions}

According to the previous notation, the discrete actions and lagrangians are related to classical ones as follows.
\begin{definition}
If $\mathcal{L}(t,\mathbf{x}(t),\mathbf{\dot x}(t))$ is a lagrangian depending on $2d+1$ variables then we set $L(t,S(\mathbf{x})(t))=%
\mathcal{L}(t,\mathbf{x}(t),\Box_\varepsilon \mathbf{x}(t))$. Moreover, 
\begin{equation}
\mathcal{A}_{cont}(\mathbf{x})=\int_a^b\mathcal{L}(t,\mathbf{x}(t),\mathbf{\dot x}(t))dt
\label{fpcv}
\end{equation}
and
\begin{equation}
\mathcal{A}_{disc}(\mathbf{x})=\int_a^bL(t,S(\mathbf{x})(t))dt=\int_a^b\mathcal{L}(t,\mathbf{x}(t),\Box_\varepsilon \mathbf{x}(t))dt
\label{fpcv2}
\end{equation}
are the respective classical and discrete actions associated to these lagrangians, defined respectively on $C^1([a,b],\mathbb{C}^d)$ and $\mathcal{C}_{pw}(d,N,\alpha,\beta)$.
\end{definition}
If the terms $x_j(t+\ell\varepsilon )\chi_{-\ell}(t)$ occuring in $S(\mathbf{x})(t)$ (see (\ref{Sx})) are replaced with new variables $\xi _{j,\ell}$, then $L(t,\xi _{1,-N},\ldots ,\xi_{d,N})$ is nothing but $\mathcal{L}(t,\mathbf{x}(t),\Box_\varepsilon \mathbf{x}(t))$ and thus depends on $d(2N+1)+1$
indeterminates.

A fundamental problem is to minimize the action (\ref{fpcv}) under Dirichlet boundary conditions when obviously, every unknown and parameter has to be real. Note that in order to deal with optima instead of critical points of the action (\ref{fpcv2}), we have to handle real valued functions and parameters. 
\begin{theorem}
Let $\mathbf{x}\in \mathcal{C}_{pw}(d,N,\mathbf{\alpha },\mathbf{\beta})$ be a critical point of (\ref{fpcv2}). Then $\mathbf{x}$ satisfies the 
following functional equation
\begin{equation}
\forall j\in \{1,\ldots ,d\},~\sum_{\ell=-N}^N\frac{\partial L}{\partial \xi
_{j,\ell}}(t-\ell\varepsilon ,S(\mathbf{x})(t-\ell\epsilon ))\chi_\ell(t)=0
\label{mcel}
\end{equation}
and the equation (\ref{mcel}) may be returned under an intrinsic form as (\ref{intrinsic}), i.e. 
\begin{equation}
\Box_{-\varepsilon }\frac{\partial \mathcal{L}}{\partial \dot x_j}(t,\mathbf{x}(t),\Box_\varepsilon \mathbf{x}(t))+\frac{\partial \mathcal{L}}{\partial x_j}(t,\mathbf{x}(t),\Box_\varepsilon \mathbf{x}(t))=0,
\label{mcelbis}
\end{equation}
for all $j\in \{1,\ldots ,d\}$.
\end{theorem}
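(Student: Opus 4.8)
The plan is to compute the first variation of $\mathcal{A}_{disc}$ directly, reduce it to a single integral tested against an arbitrary admissible direction, apply the fundamental lemma of the calculus of variations to get (\ref{mcel}), and then pass to the intrinsic form (\ref{mcelbis}) by the chain rule.

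First I would fix $\mathbf{x}\in\mathcal{C}_{pw}(d,N,\alpha,\beta)$ and a test direction $\mathbf{h}$ in the tangent space $\mathcal{C}_{pw}(d,N,0,0)$, that is, piecewise continuous with $\mathbf{h}(a)=\mathbf{h}(b)=0$. Differentiating $\eta\mapsto\mathcal{A}_{disc}(\mathbf{x}+\eta\mathbf{h})$ at $\eta=0$ and using that the $(j,\ell)$-th argument of $L$ is $x_j(t+\ell\varepsilon)\chi_{-\ell}(t)$, the G\^ateaux derivative reads
\[
\int_a^b\sum_{j=1}^d\sum_{\ell=-N}^N\frac{\partial L}{\partial\xi_{j,\ell}}(t,S(\mathbf{x})(t))\,h_j(t+\ell\varepsilon)\chi_{-\ell}(t)\,dt.
\]
The crucial manipulation is a change of variable $u=t+\ell\varepsilon$ carried out separately in each summand. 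Since $\chi_{-\ell}(t)$ is the indicator of $[\max(a,a-\ell\varepsilon),\min(b,b-\ell\varepsilon)]$, this shift sends it exactly onto $\chi_\ell(u)$ and keeps the domain inside $[a,b]$ (precisely the role of the characteristic functions flagged right after (\ref{ourbox})). After relabelling $u$ as $t$, every summand becomes a multiple of $h_j(t)$ and the derivative collapses to
\[
\int_a^b\sum_{j=1}^d h_j(t)\Bigl(\sum_{\ell=-N}^N\frac{\partial L}{\partial\xi_{j,\ell}}(t-\ell\varepsilon,S(\mathbf{x})(t-\ell\varepsilon))\chi_\ell(t)\Bigr)dt.
\]

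Since $\mathbf{h}$ ranges over all piecewise continuous functions vanishing at $a$ and $b$, the piecewise-continuous version of the fundamental lemma forces the bracketed coefficient of each $h_j$ to vanish at every continuity point; applied on each subinterval $[a+k\varepsilon,a+(k+1)\varepsilon]\cap[a,b]$ this yields (\ref{mcel}). To obtain the intrinsic form I would use that $L(t,\xi)=\mathcal{L}\bigl(t,(\xi_{j,0})_j,(\sum_m c_m\xi_{k,m})_k\bigr)$, so the chain rule gives $\partial L/\partial\xi_{j,\ell}=\delta_{\ell,0}\,\partial\mathcal{L}/\partial x_j+c_\ell\,\partial\mathcal{L}/\partial\dot x_j$. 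Substituting this into (\ref{mcel}), the $\ell=0$ contribution produces $\partial\mathcal{L}/\partial x_j$ evaluated at $(t,\mathbf{x}(t),\Box_\varepsilon\mathbf{x}(t))$, while the remaining sum $\sum_\ell c_\ell\,(\partial\mathcal{L}/\partial\dot x_j)(t-\ell\varepsilon,\dots)\chi_\ell(t)$ is recognised, after checking that replacing $\varepsilon$ by $-\varepsilon$ turns $\chi_{-\ell}$ into $\chi_\ell$, as $\Box_{-\varepsilon}$ applied to the function $t\mapsto(\partial\mathcal{L}/\partial\dot x_j)(t,\mathbf{x}(t),\Box_\varepsilon\mathbf{x}(t))$; this is exactly (\ref{mcelbis}).

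The main obstacle is the careful bookkeeping of the characteristic functions: one must verify that the shift $t\mapsto t+\ell\varepsilon$ converts $\chi_{-\ell}$ into $\chi_\ell$ both in the change of variable (giving (\ref{mcel})) and, with $\varepsilon$ replaced by $-\varepsilon$, in the identification of $\Box_{-\varepsilon}$ (giving (\ref{mcelbis})), and that no boundary terms arise because the shifted domains stay inside $[a,b]$. The piecewise-continuous regularity adds the secondary point that the fundamental lemma must be invoked interval by interval rather than on all of $[a,b]$ at once.
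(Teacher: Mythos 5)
Your proposal is correct and follows essentially the same route as the paper's own proof: Gâteaux derivative of the discrete action, a change of variable $t\mapsto t+\ell\varepsilon$ in each summand converting $\chi_{-\ell}$ into $\chi_\ell$, the Dubois--Reymond/fundamental lemma to obtain (\ref{mcel}), and the chain rule on $L$ to identify the shifted sum with $\Box_{-\varepsilon}\partial\mathcal{L}/\partial\dot x_j$. The bookkeeping points you flag (supports of the characteristic functions under shifting, and applying the lemma interval by interval for piecewise-continuous test functions) are exactly the ones the paper handles.
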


\begin{proof} 
We have for all $\mathbf{h}\in\mathcal{C}_{pw}(d,N,0,0)$
\begin{center}
$\displaystyle \mathcal{A}_{disc}(\mathbf{x}+\eta \mathbf{h})-\mathcal{A}_{disc}(\mathbf{x})=\int_a^b(L(t,S(\mathbf{x}+\eta \mathbf{h})(t))-L(t,S(\mathbf{x})(t)))dt$
$\displaystyle =\eta \int_a^b\sum_{j=1}^d\sum_{\ell=-N}^N\frac{\partial L}{\partial \xi _{j,\ell}}(t,S(\mathbf{x})(t))h_j(t+\ell\varepsilon )\chi
_{-\ell}(t)dt+O(\eta^2)$.
\end{center}
Therefore using Chasles relation and setting $t=\tau-\ell\varepsilon $ in the previous equality, we find the G\^ateaux derivative
\begin{center}
$\begin{array}{rcl}
D\mathcal{A}_{disc}(\mathbf{x})(\mathbf{h}) & = & \displaystyle%
\sum_{j=1}^d\sum_{\ell=-N}^N\int_{a+\ell\varepsilon}^{b+\ell\varepsilon }\frac{%
\partial L}{\partial \xi _{j,\ell}}(\tau -\ell\varepsilon,S(\mathbf{x})(\tau
-\ell\varepsilon ))h_j(\tau )d\tau \\ 
& = & \displaystyle\int_{a-N\varepsilon }^{b+N\varepsilon }\sum_{j=1}^d
h_j(\tau )\sum_{\ell=-N}^N\frac{\partial L}{\partial \xi _{j,\ell}}(\tau
-\ell\varepsilon ,S(\mathbf{x})(\tau-\ell\varepsilon ))\chi _\ell(\tau )d\tau =0,
\end{array}$
\end{center}
which gives (\ref{mcel}) by using Paul Dubois-Reymond lemma in the multidimensional case. By definition of $L(t,S(\mathbf{x})(t))$, we have for all  $\ell\neq 0$,
\begin{center}
$\displaystyle \frac{\partial L}{\partial \xi _{j,\ell}}(t,S(\mathbf{x})(t))=\frac{\partial \mathcal{L}}{\partial \dot x_j}(t,\mathbf{x}(t),\Box_{\varepsilon}\mathbf{x}(t))c_\ell\chi_{-\ell}$
\end{center}
and for $\ell=0$,
\begin{center}
$\displaystyle ~~~~~~\frac{\partial L}{\partial \xi _{j,0}}(t,S(\mathbf{x})(t))= \frac{\partial \mathcal{L}}{\partial \dot x_j}(t,\mathbf{x}(t),\Box_{\varepsilon}\mathbf{x}(t))c_0+ \frac{\partial{\mathcal{L}}}{\partial x_j}(t,\mathbf{x}(t),\Box_{\varepsilon}\mathbf{x}(t))$.
\end{center}
Thus, (\ref{mcel}) is equivalent to
\begin{center}
$\displaystyle\sum_{\ell=-N}^{N}c_\ell\chi_{-\ell}\frac{\partial \mathcal{L}}{\partial \dot x_j} (t-\ell\varepsilon,\mathbf{x}(t-\ell\varepsilon),\Box_{\varepsilon}\mathbf{x}(t-\ell\varepsilon))+\frac{\partial{\mathcal{L}}}{\partial x_j}(t,\mathbf{x}(t),\Box_{\varepsilon}\mathbf{x}(t))=0$.
\end{center}
Since the first term is equal to $\displaystyle \Box_{-\varepsilon }\frac{\partial \mathcal{L}}{\partial \dot x_j}(t,\mathbf{x}(t),\Box_\varepsilon \mathbf{x}(t))$, the result is proved.
\end{proof}

\begin{remark}\rm
When $\Box_\varepsilon$ is not of the shape (\ref{ourboxpq}), we do not have (\ref{Leibniz}) and integration by parts may not be performed. We use instead simple changes of variables. Note also that second order derivatives of the lagrangian occuring in C.E.L. are replaced with time delayed first order ones.
\label{rem21}
\end{remark}

\begin{remark}\rm
In \cite{Cre}, Cresson deals with the case $N=1$ and the operator $\Box_{\varepsilon,\mathcal{Q}}=\Box_\varepsilon^{[\frac{1-i}{2},\frac{1+i}{2}]}$. For any function $f(\mathbf{x},\varepsilon)$ he
defines the $\varepsilon$-dominant part $[f]_\varepsilon $ with a limiting process. He proved that if  
\begin{center}
$\displaystyle \lim_{\varepsilon\to 0}D\mathcal{A}_{disc}(\mathbf{x})=0$,
\end{center} 
then 
\begin{equation}
\displaystyle\left[ \frac{\partial \mathcal{L}}{\partial x_j}-\Box
_{\varepsilon,q}\frac{\partial \mathcal{L}}{\partial \dot x_j}\right]
_\varepsilon =\lim_{\varepsilon\to 0}\left( \frac{\partial \mathcal{L}}{\partial x_j}-\Box
_{\varepsilon,q}\frac{\partial \mathcal{L}}{\partial \dot x_j}\right)=0  
\label{cresson}
\end{equation}
which is an alternative form of (\ref{mcelbis}). Our equation (\ref{mcelbis}) is however different from (\ref{cresson}) at least in three respects. First, the characteristic functions $\chi _{-\ell}(t)$ of the various intervals appear in the sampling process (\ref{ourbox}) as well as in the action (\ref{fpcv2}). The second one is that (\ref{mcelbis}) does not depend on the coefficients nor on the length of formula (\ref{ourbox}). The
last one is the use in (\ref{mcelbis}) of $\Box_{-\varepsilon }$ instead of $-\Box_\varepsilon $. But we have $\displaystyle [\Box_{\varepsilon
,q}f(t)]_\varepsilon=[-\Box_{-\varepsilon ,q}f(t)]_\varepsilon$ for all real valued function $f$ and $t\in [a+\varepsilon ,b-\varepsilon ]$. Indeed, $[\Im(\Box_{\varepsilon,\mathcal{Q}}f)]_\varepsilon =0$ and $\Re(\Box_{-\varepsilon ,q}f)=-\Re(\Box_{\varepsilon,\mathcal{Q}}f)$ since every characteristic function in $\Box_{\varepsilon,\mathcal{Q}}$ is equal to 1.
\label{rem22}
\end{remark}

\section{Quadratic lagrangians in discrete and classical settings}

In this section we deal with a system of $d$ ordinary differential equations of the second order arising from the following lagrangian 
\begin{equation}
\mathcal{L}(t,\mathbf{x},\mathbf{\dot{x}})=\frac 12\tra{\mathbf{\dot x}}P\mathbf{\dot x}+\frac
12\tra{\mathbf{x}}Q\mathbf{x}+\tra{\mathbf{x}}R\mathbf{\dot x}+\tra{J_1}\mathbf{\dot x}+\tra{J_2}\mathbf{x}+J_3,  
\label{regulator}
\end{equation}
where $P(t),Q(t),R(t)\in\mathbb{C}^{d\times d}$, $J_1(t),J_2(t)\in\mathbb{C}^d$ and $J_3(t)$ is a scalar function. Many physical systems might be modelized by such lagrangians, in electromagnetism, quantum mechanics, material science, regulators models and so on.

A convenient setup that we assume from now on is that the coefficients in (\ref{regulator}) are real and smooth, and that for all $t\in [a,b]$, $P(t)$ and $Q(t)$ are symmetric and $R(t)$ is skew-symmetric (the symmetric part of $R(t)$ gives rise to a null lagrangian term in (\ref{regulator})).
\begin{theorem}
Let $\mathcal{L}$ be a quadratic lagrangian such that $\tra{P}=P$, $\tra{Q}=Q
$, $\tra{R}=-R$ and $L$ associated to $\mathcal{L}$ as in (\ref{fpcv2}). The Euler-Lagrange equation associated to (\ref{regulator})
can be written as 
\begin{equation}
-P\ddot {\mathbf{x}}+(-\dot P+2R)\dot {\mathbf{x}}+(\dot R+Q)\mathbf{x}-\dot{J_1}+J_2=0.  \label{elho}
\end{equation}
The equation (\ref{elho}) may be discretized a posteriori to give 
\begin{equation}
-P\Box_\varepsilon (\Box_\varepsilon \mathbf{x})+(-\dot P+2R)\Box
_\varepsilon \mathbf{x}+(\dot R+Q)\mathbf{x}-\dot J_1+J_2=0.
\label{mchomatdiagcom}
\end{equation}
If $\mathbf{x}\in \mathcal{C}_{pw}(d,N,\alpha ,\beta )$ is a critical
point of the action (\ref{fpcv2}), then it must satisfy 
\begin{equation}
\Box_{-\varepsilon }(P\Box_\varepsilon \mathbf{x})-\Box_{-\varepsilon}(R\mathbf{x})+R\Box_\varepsilon\mathbf{x}+Q\mathbf{x}+\Box_{-\varepsilon}J_1+J_2=0.  
\label{mchomat}
\end{equation}
\label{theorem4.2}
\end{theorem}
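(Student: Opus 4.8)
The plan is to prove the three assertions in order; the bulk of the work is a single computation of the two gradients of $\mathcal{L}$, after which each displayed equation follows by substitution and the linearity of the scale derivatives. To obtain the classical equation (\ref{elho}), I would first differentiate the lagrangian (\ref{regulator}). Using $\tra{P}=P$, $\tra{Q}=Q$ and $\tra{R}=-R$, I expect
\[
\frac{\partial\mathcal{L}}{\partial\dot{\mathbf{x}}}=P\dot{\mathbf{x}}-R\mathbf{x}+J_1,\qquad \frac{\partial\mathcal{L}}{\partial\mathbf{x}}=Q\mathbf{x}+R\dot{\mathbf{x}}+J_2.
\]
The only delicate point is the cross term $\tra{\mathbf{x}}R\dot{\mathbf{x}}$: differentiating it in $\dot{\mathbf{x}}$ yields $\tra{R}\mathbf{x}=-R\mathbf{x}$, whereas differentiating it in $\mathbf{x}$ yields $R\dot{\mathbf{x}}$, so the skew-symmetry of $R$ makes these two contributions \emph{add} rather than cancel. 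Inserting the gradients into $\partial\mathcal{L}/\partial\mathbf{x}-\frac{d}{dt}(\partial\mathcal{L}/\partial\dot{\mathbf{x}})=0$, expanding $\frac{d}{dt}(P\dot{\mathbf{x}}-R\mathbf{x}+J_1)=\dot P\dot{\mathbf{x}}+P\ddot{\mathbf{x}}-\dot R\mathbf{x}-R\dot{\mathbf{x}}+\dot J_1$, and grouping by order of differentiation produces exactly (\ref{elho}); the coefficient $2R$ of $\dot{\mathbf{x}}$ is precisely the sum of the two $R$-contributions just described.

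Equation (\ref{mchomatdiagcom}) then demands no argument beyond the \emph{a posteriori} substitution rule $\dot{\mathbf{x}}\mapsto\Box_\varepsilon\mathbf{x}$, $\ddot{\mathbf{x}}\mapsto\Box_\varepsilon(\Box_\varepsilon\mathbf{x})$ applied termwise to (\ref{elho}). The substantive claim is (\ref{mchomat}), and here I would invoke the intrinsic D.E.L. equation (\ref{mcelbis}) of the preceding theorem rather than recompute the variation. Evaluating the two gradients above at the discretized argument $(t,\mathbf{x}(t),\Box_\varepsilon\mathbf{x}(t))$ gives
\[
\frac{\partial\mathcal{L}}{\partial\dot{\mathbf{x}}}=P\Box_\varepsilon\mathbf{x}-R\mathbf{x}+J_1,\qquad \frac{\partial\mathcal{L}}{\partial\mathbf{x}}=Q\mathbf{x}+R\Box_\varepsilon\mathbf{x}+J_2,
\]
and feeding these into (\ref{mcelbis}), then using the $\mathbb{C}$-linearity of $\Box_{-\varepsilon}$ to distribute it across $P\Box_\varepsilon\mathbf{x}-R\mathbf{x}+J_1$, yields (\ref{mchomat}) at once.

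I do not anticipate a genuine obstacle, only a bookkeeping hazard with the $R$-term sign conventions and with the structural mismatch between the discrete and discretized forms. Note that in (\ref{mchomat}) the second-order operator appears as the nested, mixed-sign expression $\Box_{-\varepsilon}(P\Box_\varepsilon\mathbf{x})$ rather than as $-\Box_\varepsilon(\Box_\varepsilon\mathbf{x})$, and the skew term splits into $-\Box_{-\varepsilon}(R\mathbf{x})+R\Box_\varepsilon\mathbf{x}$, which is visibly not the naive discretization $(-\dot P+2R)\Box_\varepsilon\mathbf{x}+(\dot R+Q)\mathbf{x}$ of (\ref{mchomatdiagcom}). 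This discrepancy between the two forms is exactly the phenomenon flagged in Remark \ref{rem22} ($\Box_{-\varepsilon}$ versus $-\Box_\varepsilon$); reconciling them as $\varepsilon\to 0$ is what the convergence result of Section 6 will have to address, whereas for the present theorem it suffices to record both equations.
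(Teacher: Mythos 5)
Your proposal is correct and follows essentially the same route as the paper: compute the two gradients $\partial\mathcal{L}/\partial\dot{\mathbf{x}}=P\dot{\mathbf{x}}-R\mathbf{x}+J_1$ and $\partial\mathcal{L}/\partial\mathbf{x}=Q\mathbf{x}+R\dot{\mathbf{x}}+J_2$, obtain (\ref{elho}) and (\ref{mchomatdiagcom}) by direct substitution, and obtain (\ref{mchomat}) by feeding the discretized gradients into (\ref{mcelbis}) and distributing $\Box_{-\varepsilon}$ by linearity. Your extra remarks on the sign of the $R$-contributions and on the structural mismatch between (\ref{mchomatdiagcom}) and (\ref{mchomat}) are accurate but not needed for the proof.
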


\begin{proof}
First, (\ref{elho}) is straightforward, since we get 
\begin{equation*}
\frac{\partial \mathcal{L}(t,\mathbf{x},\mathbf{\dot{x}})}{\partial \mathbf{x}}=Q\mathbf{x}+R%
\mathbf{\dot x}+J_2\mbox{ and }\frac{\partial \mathcal{L}(t,\mathbf{x},\mathbf{\dot{x}})}{%
\partial \mathbf{\dot x}}=P\mathbf{\dot x}-R\mathbf{x}+J_1.
\end{equation*}
Next, (\ref{mchomatdiagcom}) is obtained by discretizing the derivatives in (\ref{elho}), i.e. by replacing $\dot{\mathbf{x}},\ddot{\mathbf{x}}$ by $\Box_\varepsilon\mathbf{x},\Box _\varepsilon (\Box _\varepsilon \mathbf{x})$ respectively and the result holds. At last, (\ref{mchomat}) is a consequence of (\ref{mcelbis}). Indeed, using the previous derivatives, (\ref{mcelbis}) gives
\begin{center}
$\Box_{-\varepsilon}(P\Box_\varepsilon\mathbf{x}-R\mathbf{x}+J_1)+Q\mathbf{x}%
+R\Box_\varepsilon\mathbf{x}+J_2=0$,
\end{center}
which ends the proof.
\end{proof}

\begin{remark}\rm
If $P,Q,R$ are independent on time then, for $t\in \mathcal{I}_S$, both equations (\ref{mchomat}) and (\ref{mchomatdiagcom}) are equivalent.
\end{remark}

\begin{remark}\rm
For the so-called linear quadratic control problem, Guibout and Scheeres \cite{GS} give Euler-Lagrange differential equations having a similar structure than (\ref{elho}). 
They suppose that the relation $\dot{\mathbf{x}}(t)=M_1(t)\mathbf{x}(t)+M_2(t)\mathbf{u}(t)$ holds between the dynamical variable $\mathbf{x}$ and the control variable $\mathbf{u}$ and prove that each critical point $(\mathbf{x},\mathbf{p})$ satisfies 
\begin{center}
$\dot {\mathbf{x}}(t)=(M_1-\frac 12M_2P^{-1}\tra{R})\mathbf{x}(t)-M_2P^{-1}\tra{M}_2{\mathbf{p}}(t)$,\\
$\dot {\mathbf{p}}(t)=(\frac 12R\tra{P}^{-1}\tra{M_2}-\tra{M}_1)\mathbf{p}(t)+(\frac 14RP^{-1}\tra{R}-Q)\mathbf{x}(t)$,
\end{center}
where $\mathbf{p}$ stands for a Lagrange multiplier of the constraint.
\end{remark}

Let us investigate now the harmonic oscillator. We have in this case $d=1$, $R=0$, $J_1=J_2=0$ and we set moreover $P(t)=p$, $Q(t)=q$ with $pq<0$.
Solutions of C.E.L. $-p\ddot{x}+qx=0$ are periodic. Let us study the periodicity of solutions of D.E.L. for $\varepsilon$ small enough. If $t$ lies in $\mathcal{I}_S$, (\ref{mchomat2}) may be simplified into 
\begin{eqnarray}
pc_{-1}c_1[\mathbf{x}(t-2\varepsilon)+\mathbf{x}(t+2\varepsilon)]+pc_0(c_{-1}+c_1)[\mathbf{x}(t-\varepsilon)+\mathbf{x}(t+\varepsilon)]
+\nonumber\\
(q+p(c_{-1}^2+c_0^2+c_1^2))\mathbf{x}(t)=0.~~~~~~~~~~~~~~~~~~~~~~~~~~
\label{oscillat} 
\end{eqnarray}
The result below presents an additional characterization of ``suitable" $\Box_\varepsilon$.

\begin{proposition}
Let $\Box_\varepsilon\in\tilde{\mathcal{O}}_{1,\varepsilon}$. The two following properties are equivalent.
\begin{enumerate}
\item[(a)] For all $p$ and $q$ with $pq<0$ and for all $\varepsilon$ small enough, the roots of the characteristic polynomial of (\ref{oscillat}) are of modulus 1.
\item[(b)] For some $k\in\mathbb{R}$, we have 
$\Box_\varepsilon=\Box_\varepsilon^{[\frac{1}{2},\frac{1}{2}]}+ik\Box_\varepsilon^{[1,-1]}$.
\end{enumerate}
\label{thmdroite}
\end{proposition}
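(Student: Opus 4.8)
The plan is to convert condition (a) into an elementary statement about a single quadratic in the variable $u=z+z^{-1}$, and then read off the algebraic constraints it imposes on the coefficients of $\Box_\varepsilon$. First I would exploit the palindromic structure of the characteristic polynomial of (\ref{oscillat}): substituting $x(t+m\varepsilon)\mapsto z^{m}$ and dividing by $z^{2}$ turns it into $pc_{-1}c_1(z^{2}+z^{-2})+pc_0(c_{-1}+c_1)(z+z^{-1})+(q+p(c_{-1}^2+c_0^2+c_1^2))=0$. Since this quartic is self-reciprocal its roots occur in pairs $z,1/z$, and each such pair solves $w^{2}-uw+1=0$ with $u=z+z^{-1}$; a pair lies on the unit circle if and only if $u$ is real with $|u|\le 2$. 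Hence (a) is equivalent to both roots of the quadratic $pc_{-1}c_1u^{2}+pc_0(c_{-1}+c_1)u+(q+p((c_{-1}-c_1)^2+c_0^2))=0$ being real and lying in $[-2,2]$.

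Next I would insert the normalisation of $\tilde{\mathcal O}_{1,\varepsilon}$. Writing $c_\ell=\gamma_\ell/\varepsilon$ with $\gamma_{-1}+\gamma_0+\gamma_1=0$ and $\gamma_1-\gamma_{-1}=1$ (equivalently $\Box_\varepsilon=\Box_\varepsilon^{[r,s]}$ with $s=1-r$, so $\gamma_1=r$, $\gamma_{-1}=r-1$, $\gamma_0=1-2r$), and setting $\mu=q\varepsilon^{2}/p$, the quadratic reduces after multiplication by $\varepsilon^{2}/p$ to
\[
\gamma_{-1}\gamma_1\,u^{2}-\gamma_0^{2}\,u+(1+\gamma_0^{2}+\mu)=0 .
\]
As $(p,q,\varepsilon)$ ranges over $pq<0$ with $\varepsilon$ small, $\mu$ ranges over a whole left-neighbourhood $(-\delta,0)$ of $0$, so (a) asserts exactly that for all such $\mu$ both roots of this quadratic are real and in $[-2,2]$. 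When $\gamma_{-1}\gamma_1=-r(1-r)=0$, i.e. for the forward/backward differences, the characteristic polynomial acquires the root $z=0$ and (a) fails outright; I would dispose of this degenerate branch first and assume $\gamma_{-1}\gamma_1\neq0$ henceforth.

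For the implication (a)$\Rightarrow$(b) I would argue that realness of both roots on an interval of $\mu$ forces the product $(1+\gamma_0^{2}+\mu)/(\gamma_{-1}\gamma_1)$ and the sum $\gamma_0^{2}/(\gamma_{-1}\gamma_1)$ to be real for every $\mu$; reading off the coefficient of $\mu$ gives $\gamma_{-1}\gamma_1\in\mathbb R$ and then $\gamma_0^{2}\in\mathbb R$. With $\gamma_{-1}\gamma_1=-r(1-r)$ and $\gamma_0^{2}=(1-2r)^{2}$, writing $r=a+ib$ these two conditions are both $b(2a-1)=0$, so either $\Re r=1/2$ or $r\in\mathbb R$. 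The case $r\in\mathbb R$, $r\neq 1/2$, is then eliminated by the interval condition: setting $v=(1-2r)^{2}$ one finds the product of the two roots at $\mu=0$ equal to $4(1+v)/(v-1)$, which is $\le -4$ on $[0,1)$ and $\ge 4$ on $(1,\infty)$, whereas two numbers of $[-2,2]$ have product of modulus at most $4$; only $v=0$, i.e. $r=1/2$, survives. Both surviving possibilities merge into $r=1/2+ik$ with $k=\Im r\in\mathbb R$, which is precisely $\Box_\varepsilon=\Box_\varepsilon^{[\frac12,\frac12]}+ik\Box_\varepsilon^{[1,-1]}$.

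Finally, for (b)$\Rightarrow$(a), substituting $\gamma_{-1}=-\tfrac12+ik$, $\gamma_0=-2ik$, $\gamma_1=\tfrac12+ik$, I would check directly that the quadratic has real coefficients, discriminant $1+(4k^{2}+1)\mu$, and roots equal to $2$ and $2(4k^{2}-1)/(4k^{2}+1)$ at $\mu=0$; for $\mu\in(-1/(4k^{2}+1),0)$ the discriminant is positive and both roots lie inside $[-2,2]$, so the characteristic roots have modulus $1$. The delicate point throughout is the order of quantifiers: the correct reading of (a) is that $\mu$ need only sweep a one-sided neighbourhood of $0$, not all of $(-\infty,0)$, and this is exactly what allows the symmetric difference $r=1/2$ to pass while the realness-forcing argument still applies. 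I expect the main obstacle to be the clean translation of the unit-circle condition into the interval condition on $u$, together with the book-keeping that rules out the real $r\neq1/2$ branch and the degenerate $r(1-r)=0$ branch.
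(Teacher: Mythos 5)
Your proof is correct and follows essentially the same route as the paper: both exploit the palindromic symmetry of the characteristic polynomial, pass to the quadratic in $u=\lambda+1/\lambda$, and show that forcing its roots into $[-2,2]$ for all small $\mu=q\varepsilon^2/p<0$ makes the coefficients real and pins down $\Re(r)=1/2$. Your treatment is if anything slightly more explicit than the paper's (which compresses the exclusion of real $r\neq 1/2$ and the degenerate branch $r(r-1)=0$ into ``easy computations''), but the underlying argument is the same.
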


\begin{proof}
The characteristic polynomial of the recurrence (\ref{oscillat}) is symmetric that is to say $D(\lambda )=\lambda
^4D(1/\lambda)$. Setting $\mu =\lambda +1/\lambda $, we get a quadratic $E(\mu)$ such that 
\begin{equation*}
E(\mu)=\varepsilon^2 D(\lambda)=\gamma_1\gamma_{-1}\mu ^2+\gamma_0(\gamma_1+\gamma_{-1})\mu
+(\gamma_0^2+\gamma_1^2+\gamma_{-1}^2-2\gamma_1\gamma_{-1}+\frac qp\varepsilon^2).
\end{equation*}
$(a)\Rightarrow(b)$. We look for the parameters $\gamma _{-1},\gamma _0,\gamma _1$ for which $D(\lambda)$ has roots on the unit circle, for $\varepsilon$ small enough and for all $p,q$, $pq<0$. This amounts to say that $E(\mu )$ has only real roots in $\left[-2,2\right]$ for all $p,q$, $pq<0$, provided $\varepsilon$ is small enough. Let us generalize a little bit. Let $\alpha ,\beta ,\gamma $ be three complex numbers, then the solutions of the quadratic $\alpha y^2+\beta y+\gamma +\varepsilon=0$ are in $\left[-2,2\right]\subset \mathbb{R}$ for all $\varepsilon$ small enough if
and only if we have $\alpha ,\beta ,\gamma \in \mathbb{R}$ and 
\[
|\beta |\leq 4|\alpha |,~~|\gamma |\leq 4|\alpha |,~~4\alpha \gamma\leq \beta ^2,~~8|\beta |\leq 16|\alpha |+4\gamma sgn(\alpha),  
\]
as shows explicit computations using the usual solution of a quadratic. Since $\Box_\varepsilon\in\tilde{\mathcal{O}}_{1,\varepsilon}$, we have $\Box_\varepsilon 1=\gamma_{-1}+\gamma_0+\gamma_1=0$ and $\Box_\varepsilon t=\gamma_1-\gamma_{-1}=1$ inside $\mathcal{I}_S$ if and only if $\gamma_1=r$, $\gamma_{-1}=r-1$ and $\gamma_0=-2r+1$ for some $r\in\mathbb{C}$. So, $\alpha=r(r-1)$, $\beta=(2r-1)^2$ and $\gamma=4r^2-4r+2$ are real. Therefore we obtain $\Re(r)=1/2$ and easy computations lead to $(b)$.\\
$(b)\Rightarrow(a)$. If $(b)$ holds then setting $r=\frac{1}{2}+ik$, we get $\gamma_1=r$, $\gamma_{-1}=r-1$ and $\gamma_0=-2r+1$. Direct computations show that the solutions of
\begin{equation}
E(\mu)=r(r-1)\mu^2-(2r-1)^2\mu+(4r^2-4r+2)-\omega^2\varepsilon^2,
\label{quadric}
\end{equation} 
where $\omega^2=-q/p$, are in $[-2,2]\subset\mathbb{R}$ for all $\varepsilon\leq 1/(|\omega|\sqrt{1+4k^2})$. 
\end{proof}

\begin{remark}\rm
We recover $\Box_{\varepsilon,\mathcal{Q}}$ and $\Box_{\varepsilon,\mathcal{S}}$ by setting $k=-1/2$ and $k=0$ in $(b)$ respectively.
\end{remark}

\begin{remark}\rm
If $\Box_\varepsilon$ lies in $\mathcal{O}_{1,\varepsilon}$, the property $(a)$ is equivalent to the following inequalities for $\gamma_{-1},\gamma_0,\gamma_1\in\mathbb{C}$
\begin{center}
$4|\gamma _1\gamma _{-1}|-|\gamma _0(\gamma _1+\gamma _{-1})|\geq 0$,~~$4|\gamma
_1\gamma _{-1}|-|\gamma _0^2+\gamma _1^2+\gamma _{-1}^2-2\gamma _1\gamma
_{-1}|\geq 0$,
$\gamma _0^2(\gamma _1+\gamma _{-1})^2-4\gamma _1\gamma _{-1}(\gamma
_0^2+\gamma _1^2+\gamma _{-1}^2-2\gamma _1\gamma _{-1})\geq 0$ and
$16|\gamma _1\gamma _{-1}|+4\gamma _0^2+4\gamma _1^2+4\gamma _{-1}^2-8\gamma
_1\gamma _{-1}-8|\gamma _0(\gamma _1+\gamma _{-1})|\geq 0$.
\end{center}
So there exists operators $\Box_\varepsilon\notin\tilde{\mathcal{O}}_{1,\varepsilon}$ satisfying $(a)$ but not $(b)$.
\end{remark}

\section{Convergence of functional equations D.E.L. to differential equations C.E.L.}

In this section, we address the problem of convergence of D.E.L. to C.E.L. in the case of quadratic lagrangians. Roughly, this convergence
property characterizes $\tilde{\mathcal{O}}_{N,\varepsilon}$. In the remainder, the matrices $P,Q,R$ in (\ref{regulator}) are dependent on time, $P,Q$ are symmetric and $R$ is skew-symmetric. To begin with, let us give the following

\begin{lemma}
The left hand side of D.E.L. (\ref{mchomat}) is equal to
\begin{equation*}
\Theta(\mathbf{x})(t)=\sum_{\tiny
\begin{array}[t]{c}
-2N\leq \ell\leq 2N \\ 
-N\leq j\leq N \\ 
|\ell+j|\leq N
\end{array}
}\hspace{-0.2cm}c_{\ell+j}c_j\chi _j(t)\chi _{-\ell}(t)P(t-j\varepsilon )\mathbf{x}(t+\ell\varepsilon )+Q(t)\mathbf{x}(t)+
\end{equation*}
\vspace{-0.8cm}
\begin{equation}
\hspace{1.7cm}\sum_{\ell=-N}^N\hspace{-0.2cm}\chi_{-\ell}(t)(c_\ell R(t)-c_{-\ell}R(t+\ell\varepsilon ))\mathbf{x}(t+\ell\varepsilon )+\Box_{-\varepsilon} J_1(t)+J_2(t)
\label{mchomat2}
\end{equation}
for all $t\in [a,b]$. 
\end{lemma}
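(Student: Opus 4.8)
The plan is to expand the left-hand side of (\ref{mchomat}) summand by summand, using nothing more than the defining formula (\ref{ourbox}) for $\Box_\varepsilon$ together with the companion expression obtained by writing (\ref{ourbox}) at $-\varepsilon$,
\[
\Box_{-\varepsilon}\mathbf{g}(t)=\sum_{m=-N}^{N}c_{-m}\,\mathbf{g}(t+m\varepsilon)\,\chi_{-m}(t),
\]
and then matching the result with the three groups of terms in (\ref{mchomat2}). The summands $Q\mathbf{x}$, $\Box_{-\varepsilon}J_1$ and $J_2$ of (\ref{mchomat}) already appear verbatim in (\ref{mchomat2}), so for them there is nothing to prove; the work concerns the two first-order terms in $R$ and the single second-order term in $P$.

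For the terms in $R$ I would expand $R\Box_\varepsilon\mathbf{x}(t)=\sum_{\ell}c_\ell R(t)\mathbf{x}(t+\ell\varepsilon)\chi_{-\ell}(t)$ directly, and likewise $\Box_{-\varepsilon}(R\mathbf{x})(t)=\sum_{\ell}c_{-\ell}R(t+\ell\varepsilon)\mathbf{x}(t+\ell\varepsilon)\chi_{-\ell}(t)$, where the summation index has merely been renamed so that the shift of $\mathbf{x}$ reads $t+\ell\varepsilon$ in both. Subtracting the second from the first and collecting the coefficient of $\mathbf{x}(t+\ell\varepsilon)\chi_{-\ell}(t)$ yields precisely the middle sum $\sum_\ell\chi_{-\ell}(t)\bigl(c_\ell R(t)-c_{-\ell}R(t+\ell\varepsilon)\bigr)\mathbf{x}(t+\ell\varepsilon)$ of (\ref{mchomat2}). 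This step is pure reindexing.

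The crux is the second-order term. Setting $\mathbf{y}=P\Box_\varepsilon\mathbf{x}$, I would use $\mathbf{y}(t+m\varepsilon)=\sum_{n}c_n P(t+m\varepsilon)\mathbf{x}(t+(m+n)\varepsilon)\chi_{-n}(t+m\varepsilon)$ and apply $\Box_{-\varepsilon}$ to obtain the double sum
\[
\Box_{-\varepsilon}(P\Box_\varepsilon\mathbf{x})(t)=\sum_{m,n}c_{-m}c_n\,\chi_{-m}(t)\,\chi_{-n}(t+m\varepsilon)\,P(t+m\varepsilon)\,\mathbf{x}(t+(m+n)\varepsilon).
\]
I would then pass to the total shift $\ell=m+n$ and to $j=-m$ as the two running indices, so that the coefficient becomes $c_{\ell+j}c_j$, the matrix argument becomes $P(t-j\varepsilon)$, and the displacement of $\mathbf{x}$ becomes $t+\ell\varepsilon$, exactly as in (\ref{mchomat2}). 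The ranges $-N\le m,n\le N$ translate into $-N\le j\le N$, $-2N\le\ell\le2N$ and, since $n=\ell+j$ must lie in $\{-N,\dots,N\}$, into the constraint $|\ell+j|\le N$.

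The one genuinely non-formal point, and the place I expect difficulty, is reconciling the two characteristic functions. Using the elementary characterization $\chi_k(t)=\mathbf{1}_{[a,b]}(t)\,\mathbf{1}_{[a,b]}(t-k\varepsilon)$, which follows at once from the definition of $\chi_k$, one checks the identity
\[
\chi_{-m}(t)\,\chi_{-n}(t+m\varepsilon)=\chi_{-m}(t)\,\chi_{-(m+n)}(t),
\]
the common factor $\mathbf{1}_{[a,b]}(t+m\varepsilon)$ being idempotent. After the change of indices this is $\chi_j(t)\chi_{-\ell}(t)$, which is the product occurring in (\ref{mchomat2}). Verifying this collapse of the shifted characteristic function, and reading off the index constraint $|\ell+j|\le N$ correctly, is the heart of the argument; once it is in place, assembling the three groups of terms reproduces $\Theta(\mathbf{x})(t)$ and proves the lemma.
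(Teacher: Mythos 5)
Your proposal is correct and follows exactly the route the paper indicates: the paper's proof is the one-line remark that the identity follows by applying formula (\ref{ourbox}) for $\Box_\varepsilon$ and $\Box_{-\varepsilon}$ repeatedly, and your expansion of the $R$-terms by reindexing and of $\Box_{-\varepsilon}(P\Box_\varepsilon\mathbf{x})$ via the substitution $j=-m$, $\ell=m+n$ is precisely that computation carried out in full. In particular your identification $\chi_k(t)=\mathbf{1}_{[a,b]}(t)\,\mathbf{1}_{[a,b]}(t-k\varepsilon)$ and the resulting collapse $\chi_{-m}(t)\chi_{-n}(t+m\varepsilon)=\chi_{-m}(t)\chi_{-(m+n)}(t)$ are the right way to obtain the product $\chi_j(t)\chi_{-\ell}(t)$ and the constraint $|\ell+j|\le N$, so the argument is complete.
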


\begin{proof}
The proof is straightforward by using several times the formula (\ref{ourbox}) for $\Box_\varepsilon$ and $\Box_{-\varepsilon}$, and showing that (\ref{mchomat}) may be returned as $\Theta(\mathbf{x})(t)=0$ for all $t\in[a,b]$.
\end{proof}

\begin{definition}
We say that D.E.L. (\ref{mchomat}) converges to C.E.L. (\ref{elho}) as $\varepsilon $ tends to 0 if, for all quadratic lagrangian $\mathcal{L}$ as in (\ref{regulator}) and all $\mathbf{x}\in\mathcal{C}^2([a,b],\mathbb{R}^d)$, 
$$\displaystyle \lim_{\varepsilon\to 0}\Theta(\mathbf{x})(t)=-P\ddot {\mathbf{x}}+(-\dot P+2R)\dot {\mathbf{x}}+(\dot R+Q)\mathbf{x}-\dot
{J_1}+J_2$$
locally uniformly in $]a,b[$, in the norm of $\mathcal{L}^\infty ([a+\delta ,b-\delta ])$ for all $\delta >0$.
\end{definition}
The remainder of this section is devoted to the proof of the main result below.
\begin{theorem}
Let $\Box_\varepsilon\in\mathcal{O}_{N,\varepsilon}$. The following six properties are equivalent.
\begin{enumerate}
\item[(a)] D.E.L. (\ref{mchomat}) converges to C.E.L. (\ref{elho}) as $\varepsilon $ tends to 0.
\item[(b)] For all $\mathbf{x}\in\mathcal{C}^2([a,b],\mathbb{R}^d)$, $\displaystyle \lim_{\varepsilon\to 0}\Box_\varepsilon\mathbf{x}(t)=\dot{\mathbf{x}}(t)$ locally uniformly in $]a,b[$.
\item[(c)] For all $\mathbf{x}\in\mathcal{C}^2([a,b],\mathbb{R}^d)$, $\displaystyle\lim_{\varepsilon\to 0}\Box_{-\varepsilon}\mathbf{x}(t)=-\dot{\mathbf{x}}(t)$ locally uniformly in $]a,b[$.
\item[(d)] The functions $t\rightarrow \Box_\varepsilon 1$ and $t\rightarrow\Box_\varepsilon t$ converge respectively to 0 and 1 locally uniformly in $]a,b[$.
\item[(e)] $\Box_\varepsilon\in\tilde{\mathcal{O}}_{N,\varepsilon}$.
\item[(f)] There exists complex numbers $k_1,\ldots ,k_{2N-1}$ such that 
\begin{equation}
\Box_\varepsilon\mathbf{x}(t)=\Box_\varepsilon^{[1,0]}\mathbf{x}(t)+\hspace{-0.3cm}\sum_{\ell=-(N-1)}^{N-1}k_{\ell+N}%
\Box_\varepsilon^{[1,-1]}\mathbf{x}(t-\ell\varepsilon).
\label{diam2}
\end{equation}
\end{enumerate}
\label{Theorem-conv-schemas}
\end{theorem}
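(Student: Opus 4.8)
The plan is to prove the six-way equivalence by establishing a cycle of implications, exploiting the fact that several of these conditions are genuinely elementary once one isolates the leading asymptotics of $\Box_\varepsilon\mathbf{x}$. The natural skeleton is $(e)\Leftrightarrow(d)$, then $(d)\Leftrightarrow(b)\Leftrightarrow(c)$, then $(b)\Rightarrow(a)$ and $(a)\Rightarrow(d)$ (or $(a)\Rightarrow(b)$), closing the loop, with $(e)\Leftrightarrow(f)$ proved separately as an algebraic normal-form statement. I would begin by writing, for $\Box_\varepsilon\in\mathcal{O}_{N,\varepsilon}$ with $c_\ell=\gamma_\ell/\varepsilon$, the two quantities that govern everything: $\Box_\varepsilon 1=\varepsilon^{-1}\sum_\ell\gamma_\ell$ and $\Box_\varepsilon t=\sum_\ell \ell\gamma_\ell$ (valid on the safety interval $\mathcal{I}_S$, where all the $\chi_{-\ell}$ equal $1$). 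Condition $(e)$, by Definition \ref{Otilde}, is exactly $\sum_\ell\gamma_\ell=0$ and $\sum_\ell\ell\gamma_\ell=1$; condition $(d)$ says these hold in the limit. Since for $t$ in a fixed compact subinterval of $]a,b[$ the characteristic functions stabilize to $1$ once $\varepsilon$ is small, the $\varepsilon$-independence of the $\gamma_\ell$ forces the limiting moment conditions to coincide with the exact ones, giving $(d)\Leftrightarrow(e)$ at once.

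The core analytic step is $(e)\Rightarrow(b)$ and its relatives. For $\mathbf{x}\in\mathcal{C}^2$ and $t$ in a compact subinterval, Taylor expansion gives
\begin{equation*}
\Box_\varepsilon\mathbf{x}(t)=\frac1\varepsilon\sum_\ell\gamma_\ell\mathbf{x}(t+\ell\varepsilon)
=\Bigl(\tfrac1\varepsilon\sum_\ell\gamma_\ell\Bigr)\mathbf{x}(t)
+\Bigl(\sum_\ell\ell\gamma_\ell\Bigr)\dot{\mathbf{x}}(t)
+O(\varepsilon),
\end{equation*}
where the $O(\varepsilon)$ is uniform because $\ddot{\mathbf{x}}$ is bounded on the subinterval. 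The zeroth-order term vanishes and the first-order term equals $\dot{\mathbf{x}}(t)$ precisely under the moment conditions of $(e)$, so $(e)\Rightarrow(b)$ with local uniform convergence. Replacing $\varepsilon$ by $-\varepsilon$ sends $\gamma_\ell$ to the coefficients with $\ell\mapsto-\ell$ and flips the sign of the first moment, yielding $(b)\Leftrightarrow(c)$; and testing $(b)$ against $\mathbf{x}\equiv 1$ and $\mathbf{x}(t)=t$ recovers $(d)$, so $(b)\Rightarrow(d)$. This pins down $(b)\Leftrightarrow(c)\Leftrightarrow(d)\Leftrightarrow(e)$.

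For the equivalence with $(a)$, the forward direction $(b)\Rightarrow(a)$ is a matter of feeding the established limits into the explicit expression $\Theta(\mathbf{x})$ of the Lemma: the $P$-block is a second-order difference quotient that converges to $-P\ddot{\mathbf{x}}+(-\dot P)\dot{\mathbf{x}}$ once one uses $(b)$ and $(c)$ together (here the product $\Box_{-\varepsilon}(P\Box_\varepsilon\mathbf{x})$ must be expanded carefully, since $P$ is time-dependent, which is exactly why the $-\dot P\,\dot{\mathbf{x}}$ term appears), the $R$-block converges to $2R\dot{\mathbf{x}}+\dot R\,\mathbf{x}$, and the $J_1$ term converges to $-\dot J_1$. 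For the converse $(a)\Rightarrow(e)$ (or $(a)\Rightarrow(d)$), I would specialize the quadratic lagrangian to make the other terms disappear — e.g.\ take $P=R=0$, $Q=0$, $J_1$ a well-chosen linear function and $J_2=0$, so that $\Theta$ reduces to $\Box_{-\varepsilon}J_1$, forcing the moment conditions — or test with constant and linear $\mathbf{x}$ directly. Finally $(e)\Leftrightarrow(f)$ is a linear-algebra normal form: writing $\Box_\varepsilon^{[1,0]}=\Delta_\varepsilon^+$ and $\Box_\varepsilon^{[1,-1]}=\varepsilon\,\Delta_\varepsilon^+\Delta_\varepsilon^+(\cdot-\varepsilon)$ (cf.\ the Remark preceding Section 3), one checks that $(f)$ parametrizes exactly the affine family of operators in $\mathcal{O}_{N,\varepsilon}$ whose zeroth and first moments are $0$ and $1$, since the $2N-1$ free parameters $k_{\ell+N}$ match the dimension $2N+1$ of $\mathcal{O}_{N,\varepsilon}$ minus the two moment constraints. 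The main obstacle I anticipate is the bookkeeping in $(b)\Rightarrow(a)$: the composition $\Box_{-\varepsilon}(P\Box_\varepsilon\mathbf{x})$ with a time-varying matrix $P$ produces cross terms that must be Taylor-expanded to the correct order to extract precisely the coefficient $-\dot P$ of $\dot{\mathbf{x}}$, and it is there that the moment identities of $(e)$ are used twice rather than once.
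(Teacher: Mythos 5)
Your overall architecture coincides with the paper's: everything is funnelled through the two moment conditions $\sum_\ell\gamma_\ell=0$ and $\sum_\ell\ell\gamma_\ell=1$; the chain $(b)\Leftrightarrow(c)\Leftrightarrow(d)\Leftrightarrow(e)$ comes from a Taylor expansion on a compact subinterval where all the $\chi_{-\ell}$ are identically $1$; $(e)\Leftrightarrow(f)$ is the same codimension-two count the paper uses; and your $(a)\Rightarrow(d)$ via a well-chosen $J_1$ is exactly the paper's $(a)\Rightarrow(c)$ step, which isolates the term $\Box_{-\varepsilon}J_1$. (One small slip: on $\mathcal{I}_S$ one has $\Box_\varepsilon t=t\,\Box_\varepsilon 1+\sum_\ell\ell\gamma_\ell$, not $\sum_\ell\ell\gamma_\ell$; this is harmless because $(d)$ first forces $\sum_\ell\gamma_\ell=0$ exactly, the $\gamma_\ell$ being independent of $\varepsilon$.)

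The one place where your plan, as written, would fail is the justification of $(b)/(c)\Rightarrow(a)$ by ``feeding the established limits into $\Theta(\mathbf{x})$.'' Conditions $(b)$ and $(c)$ are convergence statements for a \emph{fixed} $\mathcal{C}^2$ function, whereas the $P$-block of $\Theta$ is $\Box_{-\varepsilon}$ applied to the $\varepsilon$-dependent family $t\mapsto P(t)\Box_\varepsilon\mathbf{x}(t)$; since the operator norm of $\Box_{-\varepsilon}$ on $\mathcal{L}^\infty$ is of order $1/\varepsilon$ while $\|\Box_\varepsilon\mathbf{x}-\dot{\mathbf{x}}\|_\infty=O(\varepsilon)$, substituting the limit inside the outer operator leaves an uncontrolled $O(1)$ error. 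What actually works --- and is what the paper does, and what your closing sentence correctly anticipates --- is a genuine second-order Taylor expansion of the double sum $\varepsilon^{-2}\sum_{j,k}\gamma_j\gamma_k P(t-j\varepsilon)\mathbf{x}(t+(k-j)\varepsilon)$, in which the zeroth- and first-order contributions vanish by the moment conditions and the surviving second-order cross terms yield precisely $-P\ddot{\mathbf{x}}-\dot P\dot{\mathbf{x}}$ (the first-moment condition entering quadratically through $\bigl(\sum_j j\gamma_j\bigr)\bigl(\sum_k k\gamma_k\bigr)$), with the analogous treatment of the $R$-block and of the remainder terms. So the correct mechanism is named in your proposal, but the step that carries the whole theorem is asserted rather than executed; the paper devotes the explicit $P_\varepsilon,Q_\varepsilon,R_\varepsilon$ formulas together with the $\mathcal{S}_P,\mathcal{S}_Q,\mathcal{S}_R$ error bounds to exactly this point, and a complete write-up of your argument would have to reproduce that computation.
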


\begin{proof}
We will prove that $(a)\Rightarrow (c)\Rightarrow (d)\Leftrightarrow (e)\Leftrightarrow (f)$, $(d)\Rightarrow (b)\Rightarrow (c)\Rightarrow (a)$.\\
$(a)\Rightarrow (c)$. By assumption, the function $\Theta(0)=\Box_{-\varepsilon} J_1+J_2$ must tend to the constant term in (\ref{elho}) that is $-\dot{J}_1+J_2$, for all $J_1$ and $J_2$, and the result holds.\\
$(c)\Rightarrow (d)$. We choose $\mathbf{x}$ as a linear function $t\rightarrow\mathbf{x_1}t+\mathbf{x_0}$ which checks the property in $(c)$. As  $\Box_{-\varepsilon}$ acts component-wise, $\Box_{-\varepsilon}1$ and $\Box_{-\varepsilon}t$ tend to 0 as $\varepsilon$ tends to 0. But since 
\begin{equation}
\Box_\varepsilon 1=\frac{1}{\varepsilon}\sum_{\ell=-N}^N\gamma_\ell\chi_{-\ell}(t)\mbox{ and }\Box_\varepsilon t=t\Box_\varepsilon 1+\frac{1}{2}\sum_{\ell=-N}^N\ell(\gamma_\ell\chi_{-\ell}(t)-\gamma_{-\ell}\chi_\ell(t)),
\label{cdonned}
\end{equation}
for all $\varepsilon\neq 0$, we have inside $\mathcal{I}_S$ the relations $\Box_\varepsilon 1=-\Box_{-\varepsilon}1$ and $\Box_\varepsilon t=-\Box_{-\varepsilon}t+t(\Box_\varepsilon 1+\Box_{-\varepsilon}1)$. This shows that $(d)$ holds.\\ 
$(d)\Leftrightarrow (e)$. 
For all $\delta>0$ and $\varepsilon$ small enough, if $t\in[a+\delta,b-\delta]$ then $t\in\mathcal{I}_S$, and we see that the assumptions in $(d)$ are equivalent to (\ref{cdonned}) and in turn to the two linear equations
\begin{equation}
\sum_{\ell=-N}^N\gamma_\ell=0\mbox{ and }\frac{1}{2}\sum_{\ell=-N}^N\ell(\gamma_\ell-\gamma_{-\ell})=1.
\label{conditions}
\end{equation}
If $\Box_\varepsilon\in\mathcal{O}_{N,\varepsilon}$ satisfies these two equations then $\Box_\varepsilon 1\rightarrow 0$ and $\Box_\varepsilon t\rightarrow 1$ uniformly locally in $]a,b[$, that is $\Box_\varepsilon$ satisfies the statements in $(d)$ and conversely. Thus, we have proved that $\Box_\varepsilon\in\tilde{\mathcal{O}}_{N,\varepsilon}$ if and only if $(d)$ holds.\\
$(e)\Leftrightarrow (f)$. The two equations (\ref{conditions}) being independent, $\rm codim\it(\tilde{\mathcal{O}}_{N,\varepsilon})=2$. If $\hat{\mathcal{O}}_{N,\varepsilon}$ is the set of operators defined by (\ref{diam2}), then $\rm codim\it(\hat{\mathcal{O}}_{N,\varepsilon})=2$. We easily see that each operator $\Box_\varepsilon\in\hat{\mathcal{O}}_{N,\varepsilon}$ checks the two equations in $(d)$, that is $\hat{\mathcal{O}}_{N,\varepsilon}\subset \tilde{\mathcal{O}}_{N,\varepsilon}$ and lastly, $\hat{\mathcal{O}}_{N,\varepsilon}=\tilde{\mathcal{O}}_{N,\varepsilon}$.\\
$(d)\Rightarrow (b)$. We use Taylor-Lagrange formula to obtain an expansion of $\Box_\varepsilon\mathbf{x}$ as
$$\Box_\varepsilon\mathbf{x}(t)=\sum_{\ell=-N}^Nc_\ell\chi_{-\ell}(t)\left[\mathbf{x}(t)+\ell\varepsilon\dot{\mathbf{x}}(t)+\int_t^{t+\ell\varepsilon}(t+\ell\varepsilon-s)\ddot{\mathbf{x}}(s)ds\right]$$
\vspace{-0.4cm}
$$=(\Box_\varepsilon 1)\mathbf{x}(t)+(\Box_\varepsilon t-t\Box_\varepsilon 1)\dot{\mathbf{x}}(t)+\int_a^bG(s,t)\ddot{\mathbf{x}}(s)ds.$$
By using $\tilde\chi_{[0,\ell\varepsilon]}=\chi_{[0,\ell\varepsilon]}$ if $\ell\geq 0$ and $\tilde\chi_{[0,\ell\varepsilon]}=\chi_{[\ell\varepsilon,0]}$ if $\ell\leq 0$, the previous kernel $G(s,t)$ is equal to 
\begin{equation}
G(s,t)=\chi_0(s)\chi_0(t)\sum_{\ell=-N}^N(t+\ell\varepsilon-s)c_\ell\chi_{-\ell}(t)\tilde\chi_{[0,\ell\varepsilon]}(s-t).
\label{noyau}
\end{equation}
Let $\delta>0$, $V_1=\mathcal{L}^\infty([a,b])$, $V_2=\mathcal{L}^\infty([a+\delta,b-\delta])$ and $V_3=\mathcal{L}^\infty([a,b]\times[a+\delta,b-\delta])$. Then, for all function $\mathbf{x}\in\mathcal{C}^2([a,b])$, if $\varepsilon<\delta/N$, the norm $\|\Box_\varepsilon\mathbf{x}-\dot{\mathbf{x}}\|_{V_2}$ is bounded by 
$$\|\Box_\varepsilon 1\|_{V_2}\left(\|\mathbf{x}\|_{V_1}+\max(|a|,|b|)\|\dot{\mathbf{x}}\|_{V_1}\right)+\|\Box_\varepsilon t-1\|_{V_2}\|\dot{\mathbf{x}}\|_{V_1}+(b-a)\|G\|_{V_3}\|\ddot{\mathbf{x}}\|_{V_1}.$$
The first and the second terms converge to 0 as $\varepsilon$ tends to 0 help to $(d)$. In order to prove that the last term tends also to 0 as $\varepsilon$ tends to 0, we note that 
$$G(s,t)=(t-s)\Box_\varepsilon 1+\sum_{\ell=-N}^N(\ell\varepsilon \tilde\chi_{[0,\ell\varepsilon]}(s-t)+(t-s)(\tilde\chi_{[0,\ell\varepsilon]}(s-t)-1))c_\ell\chi_{-\ell}(t).$$
Obviously, first and second terms tend to 0 as $\varepsilon$ tends to 0. To deal with the third term, we distinguish two cases. If $t\leq s\leq t+k\varepsilon$ for some $k\in[-N,N]$, the term is bounded by $N\varepsilon\sum_{\ell=-N}^N|c_\ell|$ while if $|s-t|>N\varepsilon$, it is bounded by $|s-t|.||\Box_\varepsilon 1||_{V_2}$, and thus, by the sum of these two upper bounds. Collecting altogether, we find 
$$\|G\|_{V_3}\leq 2\max(|b-a|,2|a|,2|b|)\|\Box_\varepsilon 1\|_{V_2}+N\varepsilon\sum_{\ell=-N}^N|c_\ell|+\varepsilon \sum_{\ell=-N}^N|\ell c_\ell|.$$ 
Therefore, $\|G\|_{V_3}$ tends to 0 and thus, also $\|\Box_\varepsilon\mathbf{x}-\dot{\mathbf{x}}\|_{V_1}$ as $\varepsilon$ tends to 0 and the result holds.\\
$(b)\Rightarrow (c)$. The two operators $\Box_\varepsilon$ and $\Box_{-\varepsilon}$ are linked by the following property. If $\mathbf{x}_1\in\mathcal{C}_{pw}(d,N,\alpha,\beta)$ then the function $\mathbf{x}_2$ defined by $\mathbf{x}_2(t)=\mathbf{x}_1(a+b-t)$ is in $\mathcal{C}_{pw}(d,N,\beta,\alpha)$ and satisfies
$$(\Box_{-\varepsilon} \mathbf{x}_1)(t)=-(\Box_\varepsilon\mathbf{x}_2)(a+b-t).$$
Indeed, this formula comes from (\ref{ourbox}) and the fact that $\chi_{-j}(s)=\chi_j(a+b-s)$ for all $j$. Since $\dot{\mathbf{x}_2}(t)=-\dot{\mathbf{x}_1}(a+b-t)$, the convergence of $\Box_\varepsilon\mathbf{x}_2$ to $\dot{\mathbf{x}}_2$ for all $\mathbf{x}_2\in\mathcal{C}_{pw}(d,N,\beta,\alpha)$ implies the convergence of $\Box_{-\varepsilon}\mathbf{x}_1$ to $-\dot{\mathbf{x}}_1$ for all $\mathbf{x}_1\in\mathcal{C}_{pw}(d,N,\alpha,\beta)$.\\
$(c)\Rightarrow (a)$. We use again the formula (\ref{mchomat2}) and we deal with its non-constant terms. When we develop $\mathbf{x}(t+\ell\varepsilon)$ help to the Taylor-Mac Laurin formula of order $2$, we get 
\begin{equation}
\Theta(\mathbf{x})(t)=-P_\varepsilon (t)\ddot {\mathbf{x}}(t)+2R_\varepsilon (t)\dot {\mathbf{x}%
}(t)+Q_\varepsilon (t)\mathbf{x}(t)+\Box _{-\varepsilon
}J_1(t)+J_2(t)+\tilde{\Theta}(\mathbf{x})(t),  \label{mchomat33}
\end{equation}
where notation is explained hereafter. First, let us note that $\tilde{\Theta}(\mathbf{x})(t)$ is a combination of two kinds of terms  $M(\ddot{\mathbf{x}}(t+\theta_\ell\ell\varepsilon)-\ddot{\mathbf{x}}(t))$, for some $\theta_\ell\in]0,1[$ and $\ell\in\{-2N,\ldots,2N\}$, $M$ being either equal to $P(t-j\varepsilon)$ or $R(t+\ell\varepsilon)$. When $t\in\mathcal{I}_S$, we see that those terms tend to 0 as $\varepsilon$ tends to 0. Next, some lengthy computations show that 
\vspace{0.3cm}
\begin{center}
$\begin{array}{lll}
P_\varepsilon & = & -\frac{1}{2}P[\Box_{-\varepsilon}\Box_\varepsilon t^2-2t\Box_{-\varepsilon}\Box_\varepsilon t+t^2\Box_{-\varepsilon}\Box_\varepsilon 1]+\mathcal{S}_P,\vspace{0.3cm}\\
Q_\varepsilon & = & Q+P\Box_{-\varepsilon}\Box_\varepsilon 1+R(\Box_\varepsilon 1+\Box_{-\varepsilon} 1)-\dot{P}[(\Box_\varepsilon 1)(\Box_\varepsilon t-t\Box_\varepsilon 1)]+\vspace{0.2cm}\\
 & & \ddot{P}[(\Box_\varepsilon 1)(\Box_\varepsilon t^2-2t\Box_\varepsilon t+t^2\Box_\varepsilon 1)]+\dot{R}(-\Box_{-\varepsilon}t+t\Box_{-\varepsilon}1)+\mathcal{S}_Q,\vspace{0.3cm}\\
R_\varepsilon & = & \frac{1}{2}P(\Box_{-\varepsilon}\Box_\varepsilon t-t\Box_{-\varepsilon}\Box_\varepsilon 1)+\frac{1}{2}R(\Box_\varepsilon t-\Box_{-\varepsilon}t+t\Box_{-\varepsilon} 1-t\Box_\varepsilon 1)\vspace{0.2cm}\\
& & -\frac{1}{2}\dot{P}[(\Box_\varepsilon t-t\Box_\varepsilon 1)^2-(\Box_\varepsilon 1)(\Box_\varepsilon t^2-2t\Box_\varepsilon t+t^2\Box_\varepsilon 1)]+\mathcal{S}_R.
\end{array}$
\end{center}
\vspace{0.2cm}
$\mathcal{S}_P$, $\mathcal{S}_Q$ and $\mathcal{S}_R$ are three explicit matricial combinations of $P,\dot{P},\ddot{P},R,\dot{R},\ddot{R}$ which may be roughly upper bounded as follows
\begin{center}
$\displaystyle \|\mathcal{S}_l\|_{\mathcal{L}^\infty([a,b])}\leq \gamma^2\bar{N}(\varepsilon+\chi_{\overline{\mathcal{I}_S}})\left(\sum_{\ell=0}^3\|P^{(\ell)}\|_{\mathcal{L}^\infty([a,b])}+\sum_{\ell=0}^3\|R^{(\ell)}\|_{\mathcal{L}^\infty([a,b])}\right)$.
\end{center}
Here, $\gamma=\max(1,\max_\ell|\gamma_\ell|)$ and $\bar{N}=(2N+1)(4N+1)N^5$ are two fixed numbers w.r.t. $\varepsilon$. As a consequence, for all small  $\delta>0$, $\chi_{\overline{\mathcal{I}_S}}(t)$ and accordingly $\mathcal{S}_l(t)$ tend to 0 uniformly in $[a+\delta,b-\delta]$ as $\varepsilon$ tends to 0. Finally, since $(c)$ holds, inspection of each coefficient in the previous formulas shows that $P_\varepsilon(t)$, $Q_\varepsilon(t)$, $R_\varepsilon(t)$ and $\tilde{\Theta}(\mathbf{x})(t)$ tend respectively to $P(t)$, $Q(t)+\dot{R}(t)$, $R(t)-\frac{1}{2}\dot{P}(t)$ and $0$ as $\varepsilon$ tends to $0$. The convergence of D.E.L. is thus ensured for all lagrangian $\mathcal{L}$, for all function $\mathbf{x}$ and for all $t\in[a+\delta,b-\delta]$, which ends the proof.
\end{proof}

\begin{remark}\rm
We note that inside the safety interval $\mathcal{I}_S$, for each operator $\Box_\varepsilon\in\mathcal{O}_{N,\varepsilon}$, the three formulas hold:
$\displaystyle\Box_{-\varepsilon}\Box_\varepsilon 1=(\Box_\varepsilon 1)^2$, $\Box_{-\varepsilon}\Box_\varepsilon t=t (\Box_\varepsilon 1)^2$ and 
$\displaystyle\Box_{-\varepsilon}\Box_\varepsilon t^2=t^2(\Box_\varepsilon 1)^2+2(\Box_\varepsilon 1)\left(\sum_{\ell=-N}^N\ell^2c_\ell\right)-2(\Box_\varepsilon t)^2$. More generally, there exists polynomial formulas for iterates of $\Box_\varepsilon$ acting on the polynomials as expressions of $\Box_\varepsilon t^k$ for $k\in\mathbb{N}$. 
\end{remark}

\begin{remark}\rm
As mentioned in Remark \ref{rem22}, Cresson defined extremal curves of the action as functions $\mathbf{x}(t)$ such that $[D\mathcal{A}_{disc}(\mathbf{x})]_\varepsilon=0$, that is to say the dominant part of the Fr\'echet derivative of the action is zero. Obviously, this does not ensure that $\mathcal{A}_{disc}$ is extremal at $\mathbf{x}$. The matter of the convergence of the Euler-Lagrange operator restricted to extremal curves of H\"{o}lderian regularity $C^\beta([a-\varepsilon,b+\varepsilon],\mathbb{R}^d)$ for some $\beta>0$ forms part of the characterization of Discrete Euler Lagrange equations. In contrast, we work here with all curves $\mathbf{x}\in C^2([a,b],\mathbb{R}^d)$, not necessarily extremal.
\end{remark}

\begin{remark}\rm
In the terminology of $\Gamma$-convergence, see for instance \cite{SLO}, the preceding proof implies that for all $t_0\in ]a,b[$, if we denote $ev_{t_0}$ the evaluation map at $t_0$, then the composite mapping $\mathbf{x}\rightarrow ev_{t_0}\circ\Theta(\mathbf{x})$ is $\Gamma$-convergent to 
\begin{center}
$\mathbf{x}\rightarrow ev_{t_0}\circ(-P\ddot {\mathbf{x}}+(-\dot P+2R)\dot {\mathbf{x}}+(\dot R+Q)\mathbf{x}-\dot{J_1}+J_2)$.
\end{center}
Note that the authors in \cite{SLO} proved the $\Gamma$-convergence of actions which are quadratic w.r.t. $\dot{\mathbf{x}}$ and defined on some spaces of piecewise affine maps.
\end{remark}


\end{document}